\documentclass{article}

\usepackage{amsmath}
\usepackage{amssymb}
\usepackage{latexsym}
\usepackage{amsthm}
\usepackage[all]{xy}

\newtheorem{theorem}{Theorem}

\newtheorem{corollary}[theorem]{Corollary}
\newtheorem{lemma}[theorem]{Lemma}

\newcommand{\factor}[2]{{\raise0.7ex\hbox{$#1$} \!\mathord{\left/
 {\vphantom {#1 {#2}}}\right.\kern-\nulldelimiterspace}
\!\lower0.7ex\hbox{${#2}$}}}

\newcommand{\minifactor}[2]{{\raise0.5ex\hbox{\scriptsize$#1$} \!\mathord{\left/
 {\vphantom {#1 {#2}}}\right.\kern-\nulldelimiterspace}
\!\lower0.5ex\hbox{\scriptsize${#2}$}}}

\newcommand{\superminifactor}[2]{{\raise0.3ex\hbox{\tiny$#1$} \!\mathord{\left/
 {\vphantom {#1 {#2}}}\right.\kern-\nulldelimiterspace}
\!\lower0.3ex\hbox{\tiny${#2}$}}}

\newcommand{\comment}[1]{}

\sloppy

\title{The Magnus embedding is a quasi-isometry}
\author{Svetla Vassileva}

\begin{document}

\maketitle

\abstract{We show that the Magnus embedding, which embeds the free solvable group $S_{d,r}$ of rank $r$ and degree $d$ into the wreath product $\mathbb{Z}^r\wr S_{d-1,r}$, is a quasi-isometry.}

\tableofcontents

\section{Introduction}

In 1939 Magnus introduced in \cite{Magnus_original} what has come to be known as the Magnus embedding: an embedding of a group of the type $\factor{F}{N^{\prime}}$ into a matrix group with coefficients in $\mathbb{Z}[\factor{F}{N}]$. In particular, the Magnus embedding is used to embed free solvable groups into certain wreath products. In the 1950s, Fox developed the free differential calculus, which offers a different point of view on the Magnus embedding. This was the beginning of the study of algorithmic problems in free solvable groups, but it also opened the door to extremely interesting geometric questions. For example, in \cite{MRUV} Miasnikov et al. use Fox calculus and ideas of Droms, Servatius and Lewin \cite{DSL} to compute the Magnus embedding in uniform polynomial time and show that the bounded geodesic length problem is NP-complete. In \cite{my_baby}, we use the Magnus embedding to prove that the conjugacy and conjugacy search problems are solvable in polynomial time. In this paper, we expand on these results and show that the Magnus embedding is a quasi-isometry.  

Solvable groups have long been studied from the view-point of combinatorial group theory. However, they have recently been at the focus of  intense research in geometric group theory. One can trace back this interest to Gromov's result \cite{Gromov} on groups with polynomial growth. He showed that a group quasi-isometric to nilpotent group is virtually nilpotent. Erschler showed in \cite{Erschler} that this is not the case for solvable groups in general, that is, there are groups which are quasi-isometric to a solvable group, but are not virtually solvable. This gave rise to an intense study of finitely generated groups up to quasi-isometry. There have been some positive results for certain classes of solvable groups. For example, Farb and Mosher, \cite{FarbMosher1}, \cite{FarbMosher2}, showed that solvable Baumslag-Solitar groups are quasi-isometrically rigid. 

Another view-point from which the present result is interesting comes from the study of distortion of subgroups in wreath products. The Magnus embedding gives an embedding of free metabelian groups in the wreath product of free abelian groups. In \cite{DavisDistortion}, Davis and Olshanskii show that every finitely generated subgroup of $A\wr \mathbb{Z}$, where $A$ is abelian, has polynomial distortion. Further, they observe that the distortion in $\mathbb{Z}^r\wr \mathbb{Z}^r$ gives a upper bound to the distortion in free metabelian groups. A corollary of our result is that the distortion of a subgroup $H$ in the free metabelian group $M_r$ of rank $r$ is in fact the same as the distortion of $H$ in $\mathbb{Z}^r \wr \mathbb{Z}^r$.

\section{Geodesic length in $A\wr B$ and $F/N^{\prime}$}

Let $F = \langle x_1, \ldots, x_r \rangle$ be the free group of rank $r$ and $N$ a normal subgroup of $F$. Denote by $N^{\prime}$ the commutator subgroup $$N^{\prime} = [N,N] = \langle [x,y] \mid x,y \in N \rangle.$$ Let $\bar{} : F \rightarrow F/N$ and $\mu: F \rightarrow \factor{F}{N^{\prime}}$ be the canonical epimorphisms. Since $N^{\prime} \leq N$, the diagram 

\begin{displaymath}
\xymatrix{
	F \ar[r]^{\mu} \ar[d]_{\bar{\phantom{.}}} & \factor{F}{N^{\prime}} \ar[ld] \\
	\factor{F}{N}  
}
\end{displaymath}
commutes and we may use the symbol $x_i$ to denote any of $x_i$, $\overline{x_i}$ or $\mu(x_i)$, depending on the context. 

Let $A$ be a free abelian group of rank $r$, generated by $\{a_1, \ldots, a_r\}$ and denote by $B$ the group $\factor{F}{N}$. 
The homomorphism $$\phi: \factor{F}{N^{\prime}} \hookrightarrow A\wr B$$ given by $$\phi(g) = \overline{g} \cdot a_1^{\overline{\minifactor{\partial g}{\partial x_1}}} \ldots a_r^{\overline{\minifactor{\partial g}{\partial x_r}}},$$
where $\factor{\partial g}{\partial x_i}$ is the Fox derivative of $g$ with respect to the generator $x_i$, is injective \cite{Magnus_original}, and is called the \emph{Magnus embedding}. For a good description, see \cite{HNbook}.

The Cayley graph, $\mathrm{Cay}(G, \{x_1, \ldots, x_r\})$, of a group $G$ with respect to the generating set $\{x_1, \ldots, x_r\}$ has as vertex set the elements of $G$. Two vertices $g,h\in \mathrm{Cay}(G)$ are connected by a directed edge $(g,h)$ if for some generator $x_i$ of $G$, $h = gx_i$. We assume that if $x_i$ is in the generating set of $G$, then the formal inverse $x_i^{-1}$ is not. Instead, we allow paths to traverse an edge in either direction.

We denote by $\| g \|_G$ the length of the shortest word which is equal to the element $g$ in $G$. Hence $\|g\|_G$ is the distance from $1_G$ to $g$ in $\mathrm{Cay}(G)$, so we call it the geodesic length.

\subsection*{Geodesic length in $A\wr B$}

For two groups $A$ and $B$, let $A^{(B)}$ denote the set of functions of finite support from $B$ to $A$, which forms a group under point-wise multiplication. Then the \emph{wreath product} $A\wr B$ of $A$ and $B$ is defined as the semi-direct product $B\ltimes A^{(B)}$. The action of $b\in B$ on a function $f\in A^{(B)}$ is defined by $f^b(x) = f(xb^{-1})$. 

Regarding $a\in A$ as the function 
\[
a(b) = \left\{ \begin{array}{l} a \mbox{  if $b=1$,} \\ 1 \mbox{  otherwise,}\end{array}\right.
\]
the groups $A$ and $B$ canonically embed in $A \wr B$.  Further, $A \wr B$ is 
generated by the union of the generating sets of $A$ and $B$ and the relation 
$[a^{b}, (a^{\prime})^{b^{\prime}}]=1$ holds for all $a, a^{\prime}\in A$ and $b,b^{\prime} \in B$. Moreover, if $w$ is a word given in the generators of $A\wr B$, it can be rewritten in the ``standard form'' $$w = b A_1^{B_1}\hdots A_{k-1}^{B_{k -1}} A_k^{B_k}, $$ where $b\in B$, $A_1, \ldots, A_k$ are non-trivial elements of $A$, $B_1, \ldots, B_{k}\in B$ and $B_i \neq B_j$ for $i\neq j$. 

Walter Parry \cite{Parry} showed that the geodesic length of $w$ is given by $$\|w\|_{A\wr B} = \| b \|_B + \sum\limits_{i=1}^{k} \|A_i\|_A + \mathcal{L}_{\mathrm{Cay}(B)}(B_1, \ldots, B_{k}), $$ where $\mathcal{L}_{\mathrm{Cay(B)}}(B_1, \ldots, B_{k})$ denotes the length of the shortest circuit in $\mathrm{Cay}(B)$ passing through the vertices in $V =\{1_B, B_1, \ldots, B_k\}$. We will call this a minimum length cycle.

\subsection*{Geodesic length in $F/N^{\prime}$}

Miasnikov et. al \cite{MRUV} proved that the problem of finding geodesics in free metabelian groups is NP-hard. In doing so, they gave a description of geodesics in $\factor{F}{N^{\prime}}$. A word $w$ in generators of $F$ defines the path $p_w$ that $w$ traces in $\mathrm{Cay}(B)$. One can define a corresponding flow function $$\pi_w : E\big(\mathrm{Cay}(B)\big) \rightarrow \mathbb{Z}, $$ where $\pi_w(e)$ counts how many times $p_w$ passes through the edge $e$. Here, whenever $p_w$ traverses an edge $e$ in the negative direction, we say that $p_w$ passes through this edge $-1$ times. This gives rise to a labeled subgraph $\Gamma$ of $\mathrm{Cay}(B)$ consisting of the edges $$E(\Gamma) = \{e\in E\big(\mathrm{Cay}(B)\big) \mid \pi_w(e)\neq 0\} $$ and with vertices the endpoints of these edges. The edges in $E(\Gamma)$ are labelled by their corresponding flow $\pi_w(e)$. 

A remarkable fact proved in \cite{MRUV} is that two words $u$ and $v$ in $F$ define the same flow function in $\mathrm{Cay}(\factor{F}{N})$ if and only if they are equal as elements of $\factor{F}{N^{\prime}}$. Therefore we can study elements in $\factor{F}{N^{\prime}}$ through their flows. 

Denote by $C_1, \ldots, C_l$ the connected components of $\Gamma$ and let $Q$ be a minimal forest connecting them. That is, $Q$ is a subgraph of $\mathrm{Cay}(B)$ such that $\Delta = Q\cup C_1\cup \hdots \cup C_l$ is connected and the edge set $E(Q)$ is minimal. 

Define $\Delta^*$ as follows. It has the same vertex set as $\Delta$. For each edge $(u,v)$ in $C_1 \cup \ldots \cup C_l$  include $|\pi_w(u,v)|$ undirected edges between $u$ and $v$ and for each edge $(u,v)$ in $Q$ add two undirected edges between $u$ and $v$. To every path in $\Delta^*$ we can associate a path in $\Delta$ (and hence in $\mathrm{Cay}(B)$) in the obvious way. 

It is proved in \cite{MRUV} that the label in $\mathrm{Cay}(B)$ of the path corresponding to an Euler tour of the edges in $\Delta^*$ starting at the identity gives a geodesic for $w$ as element of $\factor{F}{N^{\prime}}$. Hence, the geodesic length of $w$ in $\factor{F}{N^{\prime}}$ is equal to the number of edges in $\Delta^*$, i.e.,  
\begin{equation}
\label{geodesicFNprime}
\|w\|_{\minifactor{F}{N^{\prime}}} = \sum\limits_{e\in \mathrm{supp}(p_w)} |\pi_w(e)| + 2|E(Q)|. 
\end{equation}

\comment{
It is proved in \cite{MRUV} that any word labelling a path in $\Delta$ of minimal length starting at the identity and including every edge of $\Delta$ is a geodesic for $w$ as an element of $\factor{F}{N^{\prime}}$. Hence, the geodesic length of $w$ in $\factor{F}{N^{\prime}}$ is given by 
\begin{equation}
\label{geodesicFNprime}
\|w\|_{\minifactor{F}{N^{\prime}}} = \sum\limits_{e\in \mathrm{supp}(p_w)} |\pi_w(e)| + 2|E(Q)|. 
\end{equation}
}

\section{The Magnus embedding is a quasi-isometry}

We show that $\phi: \factor{F}{N^{\prime}} \hookrightarrow A\wr B$ is a quasi-isometry. More precisely, the following theorem is true. 

\begin{theorem}
Let $w$ be a word in $\factor{F}{N^{\prime}}$ given as a product of generators $x_1, \ldots, x_r$. Then $$\frac{1}{2(r+1)} \|w\|_{\minifactor{F}{N^{\prime}}} \leq \|\phi(w)\|_{A\wr B} \leq 3 \|w\|_{\minifactor{F}{N^{\prime}}}.$$
\end{theorem}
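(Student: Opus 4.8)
The plan is to prove both inequalities by relating the two geodesic-length formulas we already have to a common geometric object: the flow graph $\Gamma$ and its associated forest $Q$ in $\mathrm{Cay}(B)$. Recall that for a word $w$, equation~(\ref{geodesicFNprime}) gives
$$\|w\|_{\minifactor{F}{N^{\prime}}} = \sum_{e\in\mathrm{supp}(p_w)}|\pi_w(e)| + 2|E(Q)|,$$
while Parry's formula gives
$$\|\phi(w)\|_{A\wr B} = \|b\|_B + \sum_{i=1}^k \|A_i\|_A + \mathcal{L}_{\mathrm{Cay}(B)}(B_1,\ldots,B_k).$$
First I would translate the right-hand side of Parry's formula into flow language. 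The key observation is that the exponent vectors recorded by the Fox derivatives, $\overline{\partial g/\partial x_i}$, assign to each vertex $v\in\mathrm{Cay}(B)$ an element $A_v\in A=\mathbb{Z}^r$ whose $i$-th coordinate is built from the flow $\pi_w$ on the edges at $v$ labelled $x_i$. So I would first establish the dictionary: each nonzero $A_i$ sits at a vertex $B_i$ touched by $\Gamma$, and $\sum_i\|A_i\|_A$ equals $\sum_v\sum_{i}|(A_v)_i|$, which is comparable to $\sum_{e}|\pi_w(e)|$ up to the factor $r$ (since each edge contributes to exactly one coordinate at one vertex, but a vertex may have up to $r$ incident coordinate-sums).

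**The upper bound.**

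For $\|\phi(w)\|_{A\wr B}\le 3\|w\|_{\minifactor{F}{N^{\prime}}}$ I would bound each of the three Parry terms by the flow data. The term $\|b\|_B = \|\overline{w}\|_B$ is the length of the endpoint of $p_w$, which is at most $\sum_e|\pi_w(e)|$. The term $\sum_i\|A_i\|_A\le\sum_e|\pi_w(e)|$, since each edge of $\Gamma$ contributes at most $|\pi_w(e)|$ to exactly one coordinate sum. The cycle term $\mathcal{L}_{\mathrm{Cay}(B)}(B_1,\ldots,B_k)$ is the harder one: the vertices $\{1,B_1,\ldots,B_k\}$ all lie in $\Delta=Q\cup C_1\cup\cdots\cup C_l$, which is connected, so a shortest cycle through them is bounded by twice the number of edges of a connected subgraph spanning them, hence by $2(\sum_e[\pi_w(e)\neq 0] + |E(Q)|)\le 2\sum_e|\pi_w(e)| + 2|E(Q)|$. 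Summing the three contributions and comparing against (\ref{geodesicFNprime}) yields the constant $3$ after absorbing terms; the precise bookkeeping is routine once the dictionary is fixed.

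**The lower bound.**

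For the lower bound $\|\phi(w)\|_{A\wr B}\ge\frac{1}{2(r+1)}\|w\|_{\minifactor{F}{N^{\prime}}}$ I would argue in the reverse direction, and this is where the factor $r+1$ enters. I need to bound both flow terms of (\ref{geodesicFNprime}) from above by $\phi$-length. For the edge-flow sum, the inequality $\sum_e|\pi_w(e)|\le (r+1)(\sum_i\|A_i\|_A + \mathcal{L}_{\mathrm{Cay}(B)}(B_1,\ldots,B_k))$ should hold because every edge carrying nonzero flow is ``witnessed'' either by a coordinate in some $A_v$ (contributing to $\sum_i\|A_i\|_A$) or by the need for the minimum-length cycle to traverse that edge; the factor $r$ accounts for the fact that at a single vertex the several coordinate directions get collapsed. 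For the forest term $2|E(Q)|$, I would show that any cycle visiting all vertices of $\Gamma$ must have length at least $|E(Q)|$, since $Q$ was chosen as a \emph{minimal} forest connecting the components and the cycle supplies such a connection.

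**The main obstacle.**

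The main difficulty I anticipate is the lower bound, specifically proving that the minimum-length-cycle term $\mathcal{L}_{\mathrm{Cay}(B)}$ controls the forest contribution $2|E(Q)|$. The subtlety is that Parry's cycle is a single closed walk that must pass through all the marked vertices $B_i$ in $\mathrm{Cay}(B)$, whereas $Q$ is an abstractly minimal connecting forest for the components of $\Gamma$; I must argue that the connectivity pattern forced on any such cycle cannot be cheaper than what $Q$ already realizes. Fixing the right constant and matching the factor of $2$ (doubling of forest edges versus the closed-walk structure) is the delicate accounting step, and getting it to interact cleanly with the $r$ coming from collapsing coordinate directions at shared vertices is where most of the care will be required.
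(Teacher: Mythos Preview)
Your dictionary is sharper than you realize: since every edge of $\mathrm{Cay}(B)$ is uniquely of the form $(b,bx_i)$, the identity $\overline{\partial w/\partial x_i}=\sum_b\pi_w(b,bx_i)\,b$ yields an \emph{exact} equality
\[
\sum_i\|A_i\|_A \;=\; \sum_{e\in\mathrm{supp}(p_w)}|\pi_w(e)|,
\]
with no factor of $r$; nothing is collapsed, because distinct generators label distinct edges. This already streamlines both directions. For the upper bound the paper does not pass through $|E(\Delta)|$ at all: it simply observes that a geodesic word for $w$ in $F/N'$ traces, in $\mathrm{Cay}(B)$, a walk through every vertex of $\Gamma$, hence through every $B_i$, so $\mathcal{L}(V)\le\|w\|_{F/N'}$ outright. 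Combined with $\|\overline w\|_B\le\|w\|_{F/N'}$ and the equality above, each of the three Parry terms is individually bounded by $\|w\|_{F/N'}$, giving the constant~$3$ with no absorption needed. (Your route via $2|E(\Delta)|$ would only give~$4$.)

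For the lower bound you have correctly located the difficulty but not the idea that resolves it; in particular your proposed step ``any cycle visiting all vertices of $\Gamma$ has length at least $|E(Q)|$'' is aimed at the wrong cycle, since $\mathcal{L}(V)$ is the length of a circuit through $V$, not through $V(\Gamma)$. The paper's key move is to \emph{upgrade} a minimum circuit $\mathcal T$ on $V$ to a circuit on all of $V(\Gamma)$. The point is that every vertex of $V(\Gamma)\setminus V$ is the \emph{terminal} endpoint of an edge of $\Gamma$ whose \emph{initial} endpoint lies in $V$ (only initial endpoints $b$ appear as coefficients in a Fox derivative), hence lies at distance~$1$ from $V$. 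So while traversing $\mathcal T$ one makes an out-and-back detour (cost~$2$) to each missing neighbour, at most $r$ of them per vertex of $V$, producing a tour of $V(\Gamma)$ of length at most $(2r+1)\mathcal{L}(V)$. This tour connects all components of $\Gamma$, and combining it with the exact flow equality gives $\|w\|_{F/N'}\le (2r+1)\mathcal{L}(V)+\sum_e|\pi_w(e)|\le 2(r+1)\|\phi(w)\|_{A\wr B}$. The factor $r$ you anticipated enters here, in the hopping step, not in the coordinate dictionary.
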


\begin{proof}

Recall that $$\phi(w) = \overline{w} \cdot a_1^{\overline{\minifactor{\partial w}{\partial x_1}}} \hdots a_r^{\overline{\minifactor{\partial w}{\partial x_r}}}. $$ 
Since each $\overline{\minifactor{\partial w}{\partial x_i}}$ is in the group ring $\mathbb{Z}[B]$, we can write for each $i = 1, \ldots, r$, 
\begin{equation}
\label{expression dw}
\overline{\frac{\partial w}{\partial x_i}} = \sum_{b\in B} u_b^{(i)} b, 
\end{equation}
with $u_b^{(i)}\in \mathbb{Z}$ for all $b\in B$ and for all $i=1, \ldots, r$. Thus, we can rewrite $\phi(w)$ as $$\phi(w) = \overline{w} \cdot \prod_{b\in B}\big(a_1^{u_b^{(1)}}\big)^b \hdots \prod_{b\in B}\big(a_r^{u_b^{(r)}}\big)^b. $$ For every $b\in B$, let 
\begin{equation}
\label{defAi}
A_b = \prod_i a_i^{u_b^{(i)}}.
\end{equation} 
Using the commutator relations of the wreath product, we can rewrite $\phi(w)$ in standard form: 
\begin{equation}
\label{expression phi(w)}
\phi(w) = \overline{w} \cdot \prod_{b\in B} A_b^b. 
\end{equation}
Since, by the definition of the Fox derivatives, the sum (\ref{expression dw}) is finite, the product (\ref{expression phi(w)}) is finite as well.  Hence we can write $$\phi(w) = \overline{w}\cdot A_1^{B_1}\hdots A_k^{B_k}, $$
where $A_1, \ldots, A_k \in A$ are non-trivial and all $B_1,\ldots, B_k \in B$ are distinct. 

\begin{lemma}
\label{sumA_i}
$$\sum_i \| A_i \|_A = \sum_{e\in \mathrm{supp}(p_w)} |\pi_w(e)|$$
\end{lemma}

\begin{proof}
Miasnikov et al. prove that for every generator $x_i$, 
\begin{equation}
\label{Fox=flow}
\overline{\frac{\partial w}{\partial x_i}} = \sum_{b\in B} \pi_w(b,bx_i) b,  
\end{equation}
i.e., that the coefficient of $b$ in the expression for the Fox derivative of $w$ with respect to $x_i$ is equal to the flow through the edge $(b,bx_i)$. It follows from equation (\ref{defAi}) that $\sum_i \|A_i\|_A$ is the sum of absolute values of the coefficients in all the Fox derivatives. Indeed, 

\begin{eqnarray*}
\sum_i \| A_i \|_A &=& \sum_i \| A_{B_i} \|_A = \sum_i \|\prod_j a_j^{u_{B_i}^{(j)}}\| = \sum_i \sum_j \big|u_{B_i}^{(j)}\big| \\
&=& \sum_i \big|\pi_w(B_i, B_ix_i)\big| = \sum_{e\in \mathrm{supp}(p_w)} |\pi_w(e)|
\end{eqnarray*}
\end{proof}

\begin{lemma}
$$\|\phi(w)\|_{A\wr B} \leq 3 \|w\|_{\minifactor{F}{N^{\prime}}}.$$
\end{lemma}

\begin{proof}

Note that since the elements $B_1, \ldots, B_k$ come from the non-zero terms in (\ref{expression dw}), it follows from (\ref{Fox=flow}) that all the vertices in $V$ appear in the vertex set of $\Gamma$. So $V \subseteq V(\Gamma)$. As described above, the geodesic of $w$ in $\factor{F}{N^{\prime}}$ is found by touring all connected components of $\Gamma$ and connecting them in an optimal way. A geodesic representation of $w$ in $\factor{F}{N^{\prime}}$ corresponds to a tour of the vertices in $V(\Gamma)$ and therefore to a tour of the vertices in $V$. Clearly, it is longer than a minimal length tour on $V$, i.e., $$\|w\|_{\minifactor{F}{N^{\prime}}} \geq \mathcal{L}(V).$$
Moreover, since $B=\factor{F}{N}$ is a quotient of $\factor{F}{N^{\prime}}$, it is easy to see that $\|\overline{w}\|_{B} \leq \|w\|_{\minifactor{F}{N^{\prime}}}$. 
Further, since $\|w\|_{\minifactor{F}{N^{\prime}}}$ is given as in (\ref{geodesicFNprime}), it follows 
from Lemma~\ref{sumA_i} that $\sum_i \|A_i\|_A \leq \|w\|_{\minifactor{F}{N^{\prime}}}$. Therefore,  
\begin{eqnarray*}
\|\phi(w)\|_{A\wr B} &=& \|w\|_B + \sum_i \|A_i\|_A + \big|\mathcal{L}\big(V\big)\big| \\
&\leq& \|w\|_{\minifactor{F}{N^{\prime}}} + \|w\|_{\minifactor{F}{N^{\prime}}} + \|w\|_{\minifactor{F}{N^{\prime}}} = 3\|w\|_{\minifactor{F}{N^{\prime}}}.
\end{eqnarray*}
\end{proof}

Let $\mathcal{T}$ be a shortest circuit of the vertices in $V$. From $\mathcal{T}$ we will produce a tour of the vertices in $\Gamma$. We will consider how the two vertex sets differ.

\comment{This is a little bit trickier, as $V$ is smaller than $V(\Gamma)$ and so, a priori, a minimal length cycle on the former gets us no nearer to a minimal forest for the connected components of the latter. So let us think about how the two vertex sets differ. Let us go back to our key formula: $$\overline{\frac{\partial w}{\partial x_i}} = \sum_{b\in B} \pi_w(b,bx_i) b.$$
}
The vertices in $V(\Gamma)$ consist of endpoints of all edges $e$ with $\pi_w(e)\neq 0$. The vertices in $V$ consist of only the initial endpoints of these edges, since $$\overline{\frac{\partial w}{\partial x_i}} = \sum_{b\in B} \pi_w(b,bx_i) b.$$

\comment{
we only have vertices whose corresponding elements appear in the expression of some Fox derivative, i.e., $b\in V$ if and only if $\pi_w(b,bx_i)\neq 0$ for some generator $x_i$. In other words, $v\in V$ if and only if $v$ is the \emph{initial} vertex of some edge $e$ with $\pi_w(e)\neq 0$. On the other hand,  So $$|V(\Gamma)| \leq r|V|.$$ 
}

Moreover, the vertices which are in $V(\Gamma) \setminus V$ are distance 1 away from a vertex in $V$ (because they correspond to the terminal vertex of edge whose initial vertex is in $V$). Now one can produce a tour of the vertices in $\Gamma$ by following $\mathcal{T}$ and simply `hopping' (at an extra cost of 2) to visit a `missing' vertex. Since for every vertex in $V$ there are at most $r$ `missing' vertices (one for each generator), one obtains a tour of the vertices of $\Gamma$ which has length at most $\mathcal{T} + 2r\mathcal{T} = (2r+1)\mathcal{T}$. 

Since $\mathcal{T}$ has minimal length $\mathcal{L}(V)$, then we obtain a tour of $V(\Gamma)$ of length at most $(2r+1) \mathcal{L}\big(V\big)$. 

It is clear that touring the connected components of $\Gamma$ and connecting them in a minimal way will give a minimal tour of the vertices in $V(\Gamma)$ and so its length will be less than that of $\mathcal{T}$. In other words, 

$$\|w\|_{\minifactor{F}{N^{\prime}}} \leq (2r+1) \mathcal{L}\big(V\big) \leq (2r+1) \|\phi(w)\|_{A\wr B} .$$

\comment{
\begin{eqnarray*}
\|w\|_{\minifactor{F}{N^{\prime}}} &\leq& (2r+1) \big| \mathcal{L}\big(V\big) \big| + \sum_{e \in \mathrm{supp}(p_w)} |\pi_w(e)|\\ 
&=& (2r+1) \big| \mathcal{L}\big(V\big) \big| + \sum_i \|A_i\|_A\\
&\leq& (2r+1) \|\phi(w)\|_{A\wr B} + \|\phi(w)\|_{A\wr B} \\
&\leq& 2(r+1)\|\phi(w)\|_{A\wr B}.
\end{eqnarray*}
} 
This finishes the proof of the theorem. 

\end{proof}

\begin{corollary}
The Magnus embedding of the free solvable group $S_{d,r}$ of class $d$ and rank $r$ into the wreath product $\mathbb{Z}^r\wr S_{d-1, r}$ is a quasi-isometry.
\end{corollary}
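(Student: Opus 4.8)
The plan is to obtain the corollary as the special case of the Theorem in which $N$ is taken to be the $(d-1)$-st derived subgroup $F^{(d-1)}$ of the free group $F = F_r$ of rank $r$. Recall that the derived series is defined by $F^{(0)} = F$ and $F^{(j+1)} = [F^{(j)}, F^{(j)}]$, and that the free solvable group of class $d$ and rank $r$ is by definition $S_{d,r} = F/F^{(d)}$. With the choice $N = F^{(d-1)}$ one has $N' = [N,N] = [F^{(d-1)}, F^{(d-1)}] = F^{(d)}$, so that $F/N' = S_{d,r}$ and $B = F/N = S_{d-1,r}$. Since each derived subgroup is characteristic in $F$, the subgroup $N$ is in particular normal, so the hypotheses of the Theorem are met.

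First I would check that for this $N$ the Magnus embedding $\phi\colon F/N' \hookrightarrow A \wr B$ of the previous section is precisely the map $S_{d,r} \hookrightarrow \mathbb{Z}^r \wr S_{d-1,r}$ named in the statement: here $A$ is the free abelian group of rank $r$, i.e. $A = \mathbb{Z}^r$, and $B = S_{d-1,r}$, whence $A \wr B = \mathbb{Z}^r \wr S_{d-1,r}$, while the defining formula $\phi(g) = \overline{g}\cdot a_1^{\overline{\partial g/\partial x_1}} \cdots a_r^{\overline{\partial g/\partial x_r}}$ is literally the same. This is a routine identification once the two quotients are recognized.

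The two inequalities of the Theorem then read
$$\frac{1}{2(r+1)}\,\|w\|_{S_{d,r}} \;\leq\; \|\phi(w)\|_{\mathbb{Z}^r \wr S_{d-1,r}} \;\leq\; 3\,\|w\|_{S_{d,r}}$$
for every word $w$, with constants depending only on $r$ and, crucially, independent of $d$ and of $w$. To upgrade this to a quasi-isometric embedding of metric spaces, I would use that $\phi$ is a homomorphism and that both word metrics are left-invariant: for any $g,h$ one has $d(\phi(g),\phi(h)) = \|\phi(g^{-1}h)\|$ and $d(g,h) = \|g^{-1}h\|$, so applying the displayed bounds to $w = g^{-1}h$ converts them into a two-sided estimate on distances with multiplicative constant $2(r+1)$ and no additive constant.

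I do not expect a genuine obstacle here: the only point requiring care is the algebraic identification $N' = F^{(d)}$, which is immediate from the definition of the derived series, together with the uniformity of the Theorem's constants in $d$. It is worth stressing that the image of $\phi$ need not be coarsely dense in $\mathbb{Z}^r \wr S_{d-1,r}$ — already for $r=1$ the map is the embedding of $\mathbb{Z}$ into $\mathbb{Z}\wr\mathbb{Z}$, whose image has linear growth inside a group of exponential growth — so \emph{quasi-isometry} is to be understood as a quasi-isometric embedding onto the image, which is exactly what is required for the distortion statement promised in the introduction.
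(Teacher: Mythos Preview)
Your argument is correct and matches the paper's intent: the corollary is stated without a separate proof, being the specialization $N = F^{(d-1)}$ of the Theorem, exactly as you carry out. Your closing observation that only a quasi-isometric \emph{embedding} is established (the image need not be coarsely dense, as your $r=1$ example shows) is a useful clarification that the paper does not make explicit.
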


\bibliographystyle{alpha}
\bibliography{MagnusEmbeddingQI}

\end{document}